\newcommand{\diag}{\operatorname{diag}}
\newcommand{\Expect}{\operatorname{\mathsf{E}}}
\newcommand{\Var}{\operatorname{\mathsf{Var}}}
\newcommand{\Cov}{\operatorname{\mathsf{Cov}}}
\def\cprime{$'$}
\newcommand{\ignore}[1]{}
\newtheorem{theorem}{Theorem}
\newtheorem{proposition}[theorem]{Proposition}
\newtheorem{conjecture}[theorem]{Conjecture}
\theoremstyle{definition}
\newtheorem{definition}[theorem]{Definition}
\newtheorem{example}[theorem]{Example}
\newtheorem{remark}[theorem]{Remark}
\numberwithin{equation}{section}
\numberwithin{theorem}{section}
\begin{document}

\title{Variance Allocation and Shapley Value}
\author[1]{Riccardo Colini-Baldeschi\thanks{\texttt{rcolini@luiss.it}. This author is a member of is a member of GNCS-INdAM.}}
\author[1]{Marco Scarsini\thanks{\texttt{marco.scarsini@luiss.it}. This author is a member of GNAMPA-INdAM. His work is partially supported by PRIN 20103S5RN3 and MOE2013-T2-1-158.}}
\author[2]{Stefano Vaccari\thanks{\texttt{stefano.vaccari@uniroma1.it}.}}

\affil[1]{Dipartimento di Economia e Finanza, LUISS, Viale Romania 32, 00197 Roma, Italy}
\affil[2]{Dipartimento MEMOTEF, Sapienza-Universit\`a di Roma, Via Del Castro Laurenziano 9,  00161 Roma}

\maketitle

\begin{abstract}
Motivated by the problem of utility allocation in a portfolio under a Markowitz mean-variance choice paradigm, we propose an allocation criterion for the variance of the sum of $n$ possibly dependent random variables. This criterion, the Shapley value, requires to translate the problem into a cooperative game. The Shapley value has nice properties, but, in general,  is computationally demanding. The main result of this paper shows that in our particular case the Shapley value has a very simple form that can be easily computed. The same criterion is used also to allocate the standard deviation of the sum of $n$ random variables and a conjecture about the relation of the values in the two games is formulated.

\bigskip

\noindent
\textbf{Keywords}: Shapley value; core; variance game; covariance matrix; computational complexity.

\noindent
\textbf{AMS 2010 Subject Classification}: 91A12, 62J10.

\end{abstract}
\newpage

\section{Introduction}\label{se:intro}

In the mean-variance model of \citet{Mar:JF1952} preferences for prospects are represented by  a linear combination of the mean and the variance of the prospect:
\begin{equation}\label{eq:Utheta}
U_{\theta}[X]=\Expect[X]-\theta\Var[X],
\end{equation}
for some $\theta>0$ \citep[see, e.g.,][]{Ste:SIAMR2001}.

If an investor is dealing with a portfolio whose returns are $X_{1}, \dots, X_{n}$, it may be important for her to allocate to each asset of the portfolio its contribution to the total utility score $U_{\theta}[\sum_{i=1}^{n}X_{i}]$. 
Given that returns are typically correlated, allocating to asset $i$ a contribution equal to $\Expect[X_{i}]-\theta\Var[X_{i}]$ would not solve the problem. 
A similar problem is considered and solved (under some conditions) in the Capital Asset Pricing Model of \citet{Sha:JF1964}  and \citet{Lin:RES1965} \citep[see, e.g.,][]{FamFre:JEP2004}. 
In this paper we use tools of cooperative game theory to solve the problem.
In particular, we resort to the Shapley value \citep{Sha:CTG1953}, a standard solution concept for cooperative games, which has nice properties and has been extensively used in a variety of fields. 
First, we define a cooperative game where the players are the assets in the portfolio, and the worth of each coalition, i.e., of every subportfolio, is the utility score of this subportfolio. 
Then, we compute the Shapley value of each component of the portfolio. 

In general the Shapley value is used to allocate costs or gains among different agents who contribute to a joint project. The basic idea is that the allocation has to be fair for each player. In order to achieve this fairness each player is allocated a value that corresponds to her marginal contribution to the worth of a coalition, the average being suitably taken over all possible coalitions. 

Different solution concepts exist, based on different principles. For instance, the core, which embodies an idea of stability, is the set of all allocations such that no coalition has an incentive to deviate from the grand coalition. We refer the reader to 
\citet{PleSud:Springer2007} or \citet{MasSolZam:CUP2013} for a nice treatment of cooperative games and their solution concepts.

One of the main drawbacks of the Shapley value is its computational complexity as the number of players grows. This is due to the fact that its expression is an average over $n!$ marginal contributions.  In our problem, though, the expression of the Shapley value is extremely easy to compute and the complexity for the computation of the whole vector of Shapley values is quadratic in $n$. We will explain how this result fits into a more general result of \citet{ConSan:P19NCAI2004} about the Shapley value of decomposable games.

\subsection{Related literature}

We are not the first to propose game-theoretic tools for the analysis of cost allocations related to risks.

In an innovative paper in actuarial science, \citet{Lem:AB1984} proposes to use tools from cooperative game theory to allocate operating costs among different lines of an insurance company. The novelty of his paper is to solve a complicated accounting problem by computing a suitable solution of a cooperative game. Whereas the accounting problem is typically quite cumbersome, once the translation in game-theoretic language is performed, the solution is elegant and easy to interpret. Lemaire's paper gave rise to a whole literature on cost allocation in insurance. 
In this subsequent literature the attention is focused on the allocation of costs when dealing with a portfolio of risks. As \citet{Den:JR2001} points out, ``the problem of allocation is interesting and non-trivial because the sum of the risk capitals of each constituent is usually larger than the risk capital of the firm taken as a whole, something called the diversification effect. This decrease of total costs, or `rebate,' needs to be shared fairly between the constituents.''
His goal is to provide an allocation criterion that is based on fairness. 
Starting from the axiomatic definition of  coherent risk measures provided by \citet{ArtDelEbeHea:MF1999}, he proposes a set of axioms for the coherence of risk capital allocation principles. He ends up with an allocation that corresponds to the Aumann-Shapley value  of nonatomic cooperative games \citep[see][]{AumSha:Princeton1974}. 
\citet{TsaBar:IME2003} and \citet{Tsa:IME2004} propose the Aumann-Shapley value as an allocation mechanism when the risk measure is given by a distortion premium principle. 
\citet{Tsa:IME2009} does the same when convex risk measures are used.
\citet{AbbHos:SPL2013} use the Shapley value to allocate capital in the tail conditional expectation model.

Our analysis is close in spirit to a stream of literature in regression analysis that deals with quantifying the relative importance of each regressor for the response.
Contributions on this topic that make use of the Shapley value can be found in   \citet{LipCon:ASMBI2001, Lip:JMASM2006, Gro:AS2007, Gro:AS2009, GroLan:ASMBI2010, Mis:MPRA2016}, among others. These ideas actually go back to \citet{LinMerGol:SFC1980, Kru:AS1987}, who do not explicitly mention the connection with the Shapley value.
A parallel more general literature studies how to quantify the importance of random input variables to a function by using ANOVA techniques. 
One way to do it is through Sobol{\cprime} indices \citep{Sob:MM1990,Sob:MMCE1993} when the inputs are independent and their generalizations when the inputs are dependent \citep[see, e.g.,][]{ChaGamPri:EJS2012, ChaGamPri:JSCS2015, Owe:JUQ2013, GilPriArn:II2015}.
A different approach, based of the Shapley value has been proposed by \citet{Owe:JUQ2014} and further studied by \citet{SonNelSta:JUQ2016} and \citet{OwePri:arxiv2016}.

It is interesting to notice that the Shapley value has been employed in various problems that involve probability models. For instance, it has been 
implicitly used in reliability theory.  
\citet{BarPro:SPA1975} define an importance index of system components, whose $j$-th coordinate indicates the probability  that the failure of component $j$ causes the whole system to fail. \citet{MarMat:JMVA2013} point out that this index is actually a Shapley value.
Cooperative game theory tools have been used in queueing theory, see, e.g.,  
\citet{AniHav:OR2010}, among others, and in inventory, see, e.g.,
\citet{MueScaSha:GEB2002} and \citet{MonSca:GEB2007}.
We refer the reader to \citet{MorPat:TOP2008} for a nice survey of possible applications of the Shapley value in various fields.

Several authors have considered computational issues related to the Shapley value and have proposed efficient algorithms in some special cases, which typically involve voting games or games with a graph structure.
Among them \citet{DenPap:MOR1994},
\citet{ConSan:P19NCAI2004, IeoSho:P6CEC2005, IeoSho:P7CEC2006, FatWooJen:AI2008, FatWooJen:Springer2010, CasGomTej:COR2009, AzaChiChaPar:PICAA2014}. 
The interested reader should consult the book by 
\citet{ChaElkWoo:MCP2011} for a nice survey of computational aspect of cooperative game theory.

\subsection{Organization of the paper}
In Section~\ref{se:games} we introduce some fundamental concepts of cooperative game theory and some important solution concepts. In Section~\ref{se:variancegames} variance games are  defined and analyzed. 
Section~\ref{se:SDgames} deals with standard-deviation games and proposes a conjecture about the comparison between the two classes of games. 
Section~\ref{se:computational} deals with some computational aspects of the Shapley value and explains why the complexity of its calculation in the variance game is polynomial.

\section{Cooperative games}\label{se:games}

We start introducing some basic concepts in cooperative game theory.
Given a set of players $N=\{1, \dots, n\}$,  a \emph{cooperative game} is a pair $\langle N, \nu \rangle$, where  $\nu: 2^{N} \to \mathbb{R}$ is such that $\nu(\varnothing) = 0$. 
Any subset $J \subset N$ is called a \emph{coalition}. The set $N$ is called the \emph{grand coalition}. 
The function $\nu$ is called the \emph{characteristic function} of the game. Given that the set of players $N$ is fixed, for the sake of simplicity, we will just call game its characteristic function.
So, if $\nu$ represents utilities, then  $\nu(J)$ is the utility that  coalition $J$ can achieve by itself. If $\nu$ represents costs, then $\nu(J)$ is the cost that coalition $J$ must pay if it acts by itself.
We call $\mathcal{G}(N)$ the class of all games on $N$.

\subsection{Core and anticore}\label{suse:core}

A game $\xi$ is called \emph{additive} if for all $I, J \subset N$ such that $I \cap J = \varnothing$, we have
\[
\xi(I\cup J) = \xi(I) + \xi(J).
\]
We call $\mathcal{M}(N)$ the class of additive games on $N$. 

The \emph{core} (\emph{anticore}) of the game $\nu$ is defined as the set of all vectors $\boldsymbol{x}=(x_{1}, \dots, x_{n})$ such that
\begin{align}
&\nu(J) \le (\ge) \sum_{i\in J} x_{i} \quad \text{for all}\ J \subset N, \label{eq:coreineq}\\
&\nu(N) = \sum_{i\in N} x_{i} . \label{eq:coreeq}
\end{align}
A vector $\boldsymbol{x} \in \mathbb{R}^{n}$ such that \eqref{eq:coreeq} holds represents a possible allocation among players of what achieved by the grand coalition.
If $\nu$ represents a utility for coalitions, then the core is the set of all possible allocations that are stable, that is, no possible coalition has an incentive to deviate. Stability is given by \eqref{eq:coreineq}, which tells that no coalition by itself can achieve more than what allocated to it. If the function $\nu$ represents costs, the set of stable allocations is given by the anticore. The core (anticore) of a game can be empty.

A game $\nu$ is called \emph{supermodular} (\emph{submodular}) if for all $I, J \subset N$ 
\[
\nu(I \cup J) + \nu(I \cap J) \ge (\le) \nu(I) + \nu(J).
\]
It is well known \citep{Sha:IJGT1971} that the core of a supermodular game and the anticore of a submodular game are non-empty.

\subsection{Shapley value}\label{suse:shapley}

As seen in Subsection~\ref{suse:core}, the core is a set of stable allocations. Its appeal is the stability of its allocations. Among its shortcomings we have the fact that it may be empty and, when it is not a singleton, it is not clear how to choose one single suitable allocation. We now introduce a different solution concept that is obtained axiomatically and produces a single allocation.

Call player $i$ a \emph{dummy} if for all $J \subset N$ we have
\[
\nu(J \cup \{i\}) = \nu(J).
\]

Call players $i,j$ \emph{symmetric} if for all $J \subset N$ such that $i \not\in J$ and $j \not\in J$ we have
\[
\nu(J\cup\{i\}) = \nu(J\cup\{j\}).
\]

The \emph{Shapley value} of $\nu$ is a function $\boldsymbol{\phi} : \mathcal{G}(N) \to \mathbb{R}^{n}$ that satisfies the following properties:
\begin{enumerate}
\item\label{it:axEFF}
\emph{Efficiency}: $\sum_{i=1}^{n}\phi_{i}(\nu) = \nu(N)$.

\item\label{it:axSYM}
\emph{Symmetry}: If $i$ and $j$ are symmetric, then $\phi_{i}(\nu)=\phi_{j}(\nu)$.

\item\label{it:axDUM}
\emph{Dummy player}: If player $i$ is a dummy, then $\phi_{i}(\nu)=0$.

\item\label{it:axLIN}
\emph{Linearity}: for $\nu, \mu \in \mathcal{G}(N)$ and $\alpha, \beta \in \mathbb{R}$ we have
\[
\boldsymbol{\phi}(\alpha\nu+\beta\mu) = \alpha\boldsymbol{\phi}(\nu) + \beta\boldsymbol{\phi}(\mu).
\]
\end{enumerate}

\citet{Sha:CTG1953} showed that  the only function $\boldsymbol{\phi}$ with these four properties has the following form
\begin{align}
\phi_{i}(\nu) &= \sum_{J\subset N\setminus\{i\}}\frac{|J|!(N-|J|-1)!}{N!} (\nu(J\cup\{i\})-\nu(J)) \label{eq:shapley}\\
&= \frac{1}{n!}\sum_{\psi \in \mathcal{P}(N)}\left(\nu(P^{\psi}(i)\cup \{i\}) - \nu(P^{\psi}(i))\right), \label{eq:shapleyperm}
\end{align}
where $\mathcal{P}(N)$ is the set of all permutations of $N$,  $P^{\psi}(i)$ is the set of players who precede $i$ in the order determined by permutation $\psi$, and $|J|$ is the cardinality of $J$.

In general the Shapley value of a game does not necessarily lie in its core.
If the game is supermodular (submodular), the Shapley value lies in the core (anticore) and is actually its barycenter.

\subsubsection{Shapley fusion property}

Consider a game $\langle N, \nu \rangle$. For $J \subset N$ consider the new game $\langle N^{J}, \nu^{J} \rangle$ where all players in $J$ are fused into a single player. 
A game $\langle N, \nu \rangle$ such that, for all $J\subset N$, we have 
\begin{equation*}
\phi_{J}(\nu^{J}) = \sum_{i\in J}\phi_{i}(\nu).
\end{equation*}
is said to satisfy the \emph{Shapley fusion property}.

In general the Shapley fusion property does not hold, as the following counterexample shows. 

\begin{example}
Take $N\{1,2,3\}$ and, for every $i,j\in N$, with $i\neq j$,
\begin{align*}
\nu(\{i\})&=0,\\
\nu(\{i,j\})&=1\\
\nu(N)&=1.
\end{align*}
By symmetry, $\phi_{i}(\nu)=1/3$,  but, for $J=\{2,3\}$, we have
\begin{equation*}
\nu^{J}(\{1\})=0,\quad\nu^{J}(J)=1,\quad\nu^{J}(\{1,J\})=1,
\end{equation*}
hence $\phi_{J}(\nu^{J})=1\neq\phi_{2}(\nu)+\phi_{3}(\nu)$.

\end{example}

\section{Variance games} \label{se:variancegames}

We consider a random vector $\boldsymbol{X}:=(X_{1}, \dots,X_{n})$ whose components can be seen, for instance, as the returns of $n$ securities in a portfolio. The return of the whole portfolio is then $\sum_{i=1}^{n}X_{i}$. If preferences are represented by the utility score $U_{\theta}$ defined in \eqref{eq:Utheta}, we want to fairly allocate the utility $U_{\theta}[\sum_{i=1}^{n}X_{i}]$ of the portfolio to each of its components. To do this we have to take into account the correlation among the various returns. 
To achieve this goal we turn to cooperative game theory. We define a suitable cooperative game based on $U_{\theta}$ and we use the Shapley value of this game as the allocation criterion.

Consider a random vector $\boldsymbol{X}=(X_{1}, \dots, X_{n})$ with finite second moments and define, for every $J \subset N$ 
\begin{equation}\label{eq:SJ}
S_{J}:=\sum_{i \in J}X_{i}.
\end{equation}
For each $J\subset N$, define
\begin{equation}\label{eq:gamma}
\gamma(J):=U_{\theta}[S_{J}] = \Expect[S_{J}]-\theta\Var[S_{J}].
\end{equation}
The expression \eqref{eq:gamma} defines a cooperative game $\langle N, \gamma \rangle$. This game is a linear combination of two other games:
\begin{equation}\label{eq:gammaLC}
\gamma(\cdot) = \varepsilon(\cdot) - \theta \nu(\cdot),
\end{equation}
where
\begin{equation}\label{eq:nu}
\varepsilon(J):=\Expect[S_{J}]\quad\text{and}\quad \nu(J):=\Var[S_{J}].
\end{equation}
Given Property~\ref{it:axLIN} of the Shapley value, we have
\begin{equation*}
\boldsymbol{\phi}(\gamma) = \boldsymbol{\phi}(\varepsilon) - \theta\boldsymbol{\phi}(\nu).
\end{equation*}
Since the expectation is a linear operator, the game $\varepsilon$ is additive and  
\begin{equation*}
\phi_{i}(\varepsilon)=\Expect[X_{i}].
\end{equation*}
Therefore, the problem of finding the Shapley value of $\gamma$ reduces to finding the Shapley value of $\nu$, which we call a \emph{variance game}.

\subsection{Main result}

The next result shows that the Shapley value of the game $\nu$ has a very intuitive simple form in terms of the covariance matrix of $\boldsymbol{X}$. 

\begin{theorem}\label{th:shapley}
For $\nu$ defined as in \eqref{eq:nu} we have
\begin{equation}\label{eq:Shapleyalloc}
\phi_{i}(\nu) = \Cov[X_{i}, S_{N}] .
\end{equation}
\end{theorem}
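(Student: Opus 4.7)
The plan is to exploit the linearity axiom of the Shapley value (Property~\ref{it:axLIN}) together with the bilinearity of covariance to decompose $\nu$ into a sum of very simple games whose Shapley values can be written down by inspection.

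First I would use the identity
\[
\nu(J) = \Var[S_J] = \sum_{i \in J} \Var[X_i] + 2\sum_{\{i,j\}\subset J,\ i<j}\Cov[X_i,X_j]
\]
to write $\nu = \sum_{i\in N}\Var[X_i]\,\nu_i^{(1)} + 2\sum_{i<j}\Cov[X_i,X_j]\,\nu_{ij}^{(2)}$, where $\nu_i^{(1)}(J) = \mathbf{1}[i\in J]$ and $\nu_{ij}^{(2)}(J) = \mathbf{1}[\{i,j\}\subset J]$. By Property~\ref{it:axLIN}, the Shapley value of $\nu$ is the corresponding linear combination of the Shapley values of these building blocks.

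The next step is to compute $\boldsymbol{\phi}(\nu_i^{(1)})$ and $\boldsymbol{\phi}(\nu_{ij}^{(2)})$. For $\nu_i^{(1)}$, player $i$ is the only non-dummy (every other player satisfies the dummy condition) and efficiency forces $\phi_i(\nu_i^{(1)}) = 1$, with all other coordinates equal to $0$ by Property~\ref{it:axDUM}. For $\nu_{ij}^{(2)}$, every player outside $\{i,j\}$ is a dummy, so their Shapley values vanish; players $i$ and $j$ are symmetric, so by Property~\ref{it:axSYM} they receive equal shares, and efficiency (Property~\ref{it:axEFF}) combined with $\nu_{ij}^{(2)}(N)=1$ yields $\phi_i(\nu_{ij}^{(2)}) = \phi_j(\nu_{ij}^{(2)}) = 1/2$.

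Finally I would assemble the pieces: for each $i \in N$,
\[
\phi_i(\nu) = \Var[X_i]\cdot 1 + 2\sum_{j\neq i}\Cov[X_i,X_j]\cdot \tfrac{1}{2} = \sum_{j\in N}\Cov[X_i,X_j] = \Cov[X_i,S_N],
\]
which is exactly \eqref{eq:Shapleyalloc}. I do not anticipate a genuine obstacle here: the whole argument rests on spotting that $\nu_{ij}^{(2)}$ is (up to relabelling) a two-player unanimity game embedded in $N$, which trivializes the Shapley computation once linearity is invoked. The only care needed is bookkeeping the factor of $2$ from the off-diagonal terms of the covariance expansion so that it cancels the factor of $1/2$ from the symmetric split.
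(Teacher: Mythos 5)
Your proof is correct, but it follows a genuinely different route from the paper's. The paper argues directly from the permutation form \eqref{eq:shapleyperm}: it computes the marginal contribution $\nu(J\cup\{i\})-\nu(J)=\Var[X_{i}]+2\sum_{j\in J}\Cov[X_{i},X_{j}]$ and then averages over all orderings, using the fact that any fixed $j\neq i$ precedes $i$ in exactly half of the $n!$ permutations, so each cross term $2\Cov[X_{i},X_{j}]$ contributes with weight $1/2$. You instead decompose $\nu$ into a linear combination of unanimity-type games $\nu_{i}^{(1)}$ and $\nu_{ij}^{(2)}$ with coefficients $\Var[X_{i}]$ and $2\Cov[X_{i},X_{j}]$, and evaluate each building block purely from the axioms (dummy, symmetry, efficiency), with linearity assembling the answer; this is sound, since the axioms pin down the value of these elementary games uniquely, and the factor-of-$2$/factor-of-$\tfrac12$ cancellation you flag is exactly the same combinatorial fact the paper extracts from counting permutations. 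What each approach buys: the paper's computation is self-contained and needs nothing beyond the explicit formula, while your decomposition makes the structural reason for the simple answer transparent and in fact anticipates the paper's own Section~\ref{se:computational}, where the same pairwise decomposition of $\nu$ (into terms concerning at most two players, in the sense of \citet{ConSan:P19NCAI2004}) is used to explain why the Shapley value of the variance game is computable in polynomial time.
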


\begin{proof}
From \eqref{eq:nu}, for $i \not\in J$, we have
\begin{align*}
\nu(J\cup\{i\})-\nu(J) &=
\Var\left(\sum_{j \in J\cup\{i\}} X_{j} \right) - \Var\left(\sum_{j \in J} X_{j} \right) \\
&= \sum_{j, \ell \in J\cup\{i\}} \Cov[X_{j},X_{\ell}] - \sum_{j, \ell \in J} \Cov[X_{j},X_{\ell}] \\
&=\Var[X_{i}] + 2\sum_{j\in J}\Cov\left[X_{i},X_{j}\right].
\end{align*}
Therefore, from \eqref{eq:shapleyperm} it follows that
\begin{align*}
\phi_{i}(\nu) &= \frac{1}{n!} \sum_{\psi \in \mathcal{P}(N)} \left(\Var[X_{i}] +2 \sum_{j \in P^{\psi}(i)}\Cov[X_{i}, X_{j}]\right) \\
&= \Var[X_{i}] +  \frac{2}{n!} \sum_{j\in N\setminus\{i\}} \sum_{\psi:j\in P^{\psi}(i)}\Cov[X_{i}, X_{j}] \\
&= \Var[X_{i}] +  \sum_{j\in N\setminus\{i\}} \Cov[X_{i}, X_{j}] \\
&= \sum_{j=1}^{n} \Cov[X_{i}, X_{j}] \\
&= \Cov[X_{i}, S_{N}]. 
\end{align*}
\qed
\end{proof}
The allocation in \eqref{eq:Shapleyalloc} is similar (although not equal) to the one obtained by \citet[][Theorem~3.2]{Wan:SCOR2002} for multinormally distributed risks, when the exponential tilting model is used. 
Analogously, \citet{OwePri:arxiv2016} find a similar allocation for a linear regression model with multinormally distributed regressors. 
Notice that Theorem~\ref{th:shapley} does not make any parametric assumption on the distribution of $\boldsymbol{X}$.

\begin{remark}
An immediate corollary of Theorem~\ref{th:shapley} is that the Shapley fusion property holds for variance games.
\end{remark}

\subsection{Additional properties}

We examine now some interesting properties of the variance allocation through the Shapley value.

\begin{example}
The Shapley value of the variance game can assume negative values. Consider the case $\boldsymbol{X}=(X_{1},X_{2})$ with $X_{2}=-2X_{1}$ and $\Var[X_{1}]=1$. Then 
\[
\Cov[\boldsymbol{X}] = 
\begin{pmatrix}
1 & -2\\
-2 & 4 
\end{pmatrix},
\]
hence $\phi_{1}(\nu) = -1$ and $\phi_{2}(\nu) = 2$. The idea is that, if a random variable contributes to hedge a risk, then it is ``rewarded'' with a negative Shapley value.
\end{example}

The following property says that, if perfect hedging can be achieved, then the Shapley value is identically zero, no matter what the individual variances are.

\begin{proposition}\label{pr:zerovar}
If $\Var[S_{N}] = 0$, then $\boldsymbol{\phi}(\nu) = \boldsymbol{0}$.
\end{proposition}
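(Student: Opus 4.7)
The plan is to deduce the proposition directly from Theorem~\ref{th:shapley}, which gives $\phi_i(\nu) = \Cov[X_i, S_N]$ for every $i$. So the entire task reduces to showing that each covariance $\Cov[X_i, S_N]$ vanishes under the hypothesis $\Var[S_N] = 0$.

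First I would invoke the Cauchy--Schwarz inequality for covariances, namely
\begin{equation*}
|\Cov[X_i, S_N]| \le \sqrt{\Var[X_i]\,\Var[S_N]}.
\end{equation*}
Since the existence of second moments was assumed when the variance game $\nu$ was defined, $\Var[X_i]$ is finite, and the hypothesis forces the right-hand side to be $0$. Hence $\Cov[X_i, S_N] = 0$ for every $i \in N$, and Theorem~\ref{th:shapley} yields $\phi_i(\nu) = 0$ for every $i$, i.e., $\boldsymbol{\phi}(\nu) = \boldsymbol{0}$.

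An equivalent route, which I would mention for intuition, is to note that $\Var[S_N] = 0$ means $S_N$ is almost surely equal to a constant, so $\Cov[X_i, S_N] = \Cov[X_i, \Expect[S_N]] = 0$ trivially. Either argument is a one-line consequence of Theorem~\ref{th:shapley}, so there is no substantive obstacle; the only thing worth emphasizing is the efficiency-style sanity check that $\sum_i \phi_i(\nu) = \Cov[S_N, S_N] = \Var[S_N] = 0$, which is consistent with Property~\ref{it:axEFF} of the Shapley value.
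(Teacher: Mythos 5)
Your proposal is correct. Both you and the paper reduce the statement, via Theorem~\ref{th:shapley}, to showing that $\Cov[X_{i},S_{N}]=0$ for every $i\in N$, but the way you establish that vanishing is genuinely different. The paper argues through an almost-sure identity: since $\Var[S_{N}]=0$, the sum $S_{N}$ is a.s.\ constant (normalized to zero), so $X_{i}=-\sum_{j\neq i}X_{j}$, which yields $\Var[X_{i}]=\sum_{j\neq i}\sum_{\ell\neq i}\Cov[X_{j},X_{\ell}]$; substituting this back into $\sum_{i}\sum_{j}\Cov[X_{i},X_{j}]=0$ gives $\sum_{j}\Cov[X_{i},X_{j}]=0$ for each $i$. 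You instead get $\Cov[X_{i},S_{N}]=0$ in one line from the Cauchy--Schwarz bound $|\Cov[X_{i},S_{N}]|\le\sqrt{\Var[X_{i}]\Var[S_{N}]}$ (finiteness of second moments being part of the setup), or, even more directly, from the observation that $S_{N}$ is a.s.\ constant and covariance with a constant is zero — the latter being essentially a shortcut through the same fact the paper uses, without the WLOG normalization and the double-sum bookkeeping. Your argument is shorter and avoids any algebraic manipulation of the covariance matrix; the paper's route has the mild side benefit of exhibiting the explicit identity relating $\Var[X_{i}]$ to the covariances of the remaining variables, but that identity is not needed for the conclusion. Your closing sanity check that $\sum_{i}\phi_{i}(\nu)=\Var[S_{N}]=0$ is consistent with efficiency is a nice touch, though of course it is not part of the proof.
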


\begin{proof}
Let $\Var[S_{N}] = 0$, that is
\begin{equation}\label{eq:varSN}
\sum_{i\in N} \sum_{j\in N} \Cov[X_{i}, X_{j}] = 0.
\end{equation}
Then $S_{N}$ is almost surely a constant, which, without any loss of generality, we can assume to be zero. Hence for each $i \in N$
\[
X_{i} = - \sum_{j \in N\setminus\{i\}} X_{j},
\]
which implies
\begin{equation}\label{eq:varXi}
\Var[X_{i}] = \sum_{j \in N\setminus\{i\}}  \sum_{\ell \in N\setminus\{i\}} \Cov[X_{j}, X_{\ell}].
\end{equation}
Plugging \eqref{eq:varXi} into \eqref{eq:varSN} we obtain
\[
\sum_{j\in N} \Cov[X_{i}, X_{j}] = 0 \quad\text{for all }i\in N,
\]
which, by Theorem~\ref{th:shapley}, gives the desired result.
\qed
\end{proof}

As the following example shows, it is not possible to apply the result of Proposition~\ref{pr:zerovar} to a subvector of the vector $\boldsymbol{X}$. 

\begin{example}
It is possible to have $\Var[S_{J}] = 0$ for some $J \subset N$, without having $\phi_{j}(\nu) = 0$ for all $j\in J$. For instance, let $\boldsymbol{X}=(X_{1}, X_{2}, X_{3}, X_{4})$ be such that
\[
-X_{1} = X_{2} = X_{3} = X_{4},
\]
with $\Var[X_{1}] = 1$. Then 
\[
\Cov[\boldsymbol{X}] = 
\begin{pmatrix}
1 & -1 & -1 & -1 \\
-1 & 1 & 1 & 1 \\
-1 & 1 & 1 & 1 \\
-1 & 1 & 1 & 1 
\end{pmatrix},
\]
$\Var[X_{1}+X_{2}] = 0$ and $\phi_{1}(\nu) = -2$ and $\phi_{2}(\nu) = 2$. 

This is due to the fact that the Shapley value is computed globally, looking at the marginal contributions of a random variable to the variance of all possible subvectors of the vector $\boldsymbol{X}$. On the other hand, what is true is that, if $\Var[S_{J}] = 0$, then $\sum_{j\in J} \phi_{j} = 0$.
\end{example}

\begin{example}
Even if the Shapley value has the symmetry property, it is possible to have $X_{i}$ and $X_{j}$ exchangeable (or even i.i.d.) without necessarily having $\phi_{i}(\nu) = \phi_{j}(\nu)$. 
For instance, consider $\boldsymbol{X}=(X_{1}, X_{2}, X_{3}, X_{4})$ such that $X_{2}$ and $X_{3}$ are i.i.d. and
\[
X_{1} = X_{2}, \qquad X_{4} = -X_{3}.
\]
Let $\Var[X_{1}] = 1$. Then 
\[
\Cov[\boldsymbol{X}] = 
\begin{pmatrix}
1 & 1 & 0 & 0 \\
1 & 1 & 0 & 0 \\
0 & 0 & 1 & -1 \\
0 & 0 & -1 & 1 
\end{pmatrix}.
\]
Therefore $\phi_{2}(\nu) = 2$ and $\phi_{3}(\nu) = 0$.

Again, this is due to the global property of the Shapley value. Two exchangeable random variables can have very different relations with the other components of $\boldsymbol{X}$, therefore their Shapley value can differ.
\end{example}

Finally, we look at supermodularity (submodularity) of the variance game, which, as mentioned before, has important implications for the nonemptiness of its core (anticore).

\begin{proposition}\label{pr:core}
\begin{enumerate}[{\rm (a)}]
\item\label{it:pr:core-a}
If $\Cov[X_{i}, X_{j}] \ge 0$ for all $i,j \in N$, then the game $\nu$ is supermodular.

\item\label{it:pr:core-b}
If $\Cov[X_{i}, X_{j}] \le 0$ for all $i,j \in N$, then the game $\nu$ is submodular.
\end{enumerate}
\end{proposition}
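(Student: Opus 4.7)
The plan is to expand $\nu(I \cup J) + \nu(I \cap J) - \nu(I) - \nu(J)$ directly using $\nu(K) = \Var[S_K] = \sum_{i,j \in K} \Cov[X_i, X_j]$ and check that all the covariance terms with ambiguous sign cancel, leaving only a sum whose sign we control by hypothesis.

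Concretely, I would decompose the grand coalition pieces relevant to $I$ and $J$ into the three disjoint blocks $I \cap J$, $I \setminus J$, and $J \setminus I$, and set $A := S_{I \cap J}$, $B := S_{I \setminus J}$, $C := S_{J \setminus I}$. Then $S_I = A + B$, $S_J = A + C$, $S_{I \cup J} = A + B + C$, and $S_{I \cap J} = A$. Applying bilinearity of the variance (i.e., expanding $\Var[\cdot]$ of each sum into $\Var$'s and $\Cov$'s of the blocks) gives
\begin{align*}
\nu(I \cup J) + \nu(I \cap J) - \nu(I) - \nu(J)
&= \bigl(\Var[A] + \Var[B] + \Var[C] + 2\Cov[A,B] + 2\Cov[A,C] + 2\Cov[B,C]\bigr) \\
&\quad + \Var[A] - \bigl(\Var[A] + \Var[B] + 2\Cov[A,B]\bigr) \\
&\quad - \bigl(\Var[A] + \Var[C] + 2\Cov[A,C]\bigr) \\
&= 2\,\Cov[S_{I \setminus J},\, S_{J \setminus I}].
\end{align*}

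At this point the proof is essentially done: by bilinearity of covariance,
\[
\Cov[S_{I \setminus J},\, S_{J \setminus I}] = \sum_{i \in I \setminus J}\sum_{j \in J \setminus I} \Cov[X_i, X_j].
\]
Under the hypothesis of part (a) every summand is nonnegative, so the difference is $\ge 0$ and $\nu$ is supermodular; under part (b) every summand is nonpositive, so the difference is $\le 0$ and $\nu$ is submodular. I do not foresee any real obstacle here: the only mildly delicate point is making sure the block decomposition is applied correctly so that the $A$--$A$, $A$--$B$, $A$--$C$ terms cancel cleanly and only the $B$--$C$ cross term survives, which the calculation above confirms.
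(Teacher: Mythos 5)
Your proposal is correct and is essentially the paper's own argument: both reduce supermodularity to the identity $\nu(I\cup J)+\nu(I\cap J)-\nu(I)-\nu(J)=2\sum_{i\in I\setminus J}\sum_{j\in J\setminus I}\Cov[X_i,X_j]$, whose sign is determined by the hypothesis. Your block decomposition $A,B,C$ is just a notational repackaging of the paper's direct expansion of the variances into double sums of covariances.
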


\begin{proof}
\noindent
\ref{it:pr:core-a} 
If $\Cov[X_{i}, X_{j}] \ge 0$, then we have
\begin{align*}
\nu(I \cup J) + \nu(I \cap J) &= \Var[S_{I \cup J}] + \Var[S_{I \cap J}] \\
&= \sum_{i \in I \cup J} \sum_{j \in I \cup J} \Cov[X_{i}, X_{j}] + \sum_{i \in I \cap J} \sum_{j \in I \cap J}\Cov[X_{i}, X_{j}] \\
&= \sum_{i \in I} \sum_{j \in I} \Cov[X_{i}, X_{j}] + \sum_{i \in J} \sum_{j \in J} \Cov[X_{i}, X_{j}] + 2 \sum_{i\in I\setminus J} \sum_{j\in J\setminus I} \Cov[X_{i}, X_{j}] \\
&\ge \sum_{i \in I} \sum_{j \in I} \Cov[X_{i}, X_{j}] + \sum_{i \in J} \sum_{j \in J} \Cov[X_{i}, X_{j}] \\
&= \Var[S_{I}] + \Var[S_{J}] \\
&= \nu(I) + \nu(J).
\end{align*}

\noindent
\ref{it:pr:core-b} 
If $\Cov[X_{i}, X_{j}] \le 0$, then the inequality goes in the opposite direction.
\qed
\end{proof}

\section{Standard deviation games}\label{se:SDgames}

Given a random vector $\boldsymbol{X}=(X_{1}, \dots, X_{n})$ we can define a standard deviation game $\lambda$ on $N=\{1, \dots, n\}$ as follows:
\begin{equation*}
\lambda(J)=\sqrt{\Var[S_{J}]},
\end{equation*}
where $S_{J}$ is defined as in \eqref{eq:SJ}. Computing the Shapley value for this game is much more difficult than for the variance game. We will examine the relation between these two types of games.

The next example shows that the Shapley fusion property does not hold for standard deviation games.

\begin{example}
Consider the following covariance matrix
\begin{equation*}
\Sigma=
\begin{bmatrix}
1 & 0 & 0 \\
0 & 4 & 0 \\
0 & 0 & 9
\end{bmatrix}.
\end{equation*}
The corresponding standard deviation game is
\begin{align*}
&\lambda(\{1\})=1, \quad \lambda(\{2\})=2, \quad  \lambda(\{3\})=3,\\
&\lambda(\{1,2\})=\sqrt{5}, \quad \lambda(\{1,3\})=\sqrt{10}, \quad \lambda(\{2,3)\}=\sqrt{13},\\
&\lambda(\{1,2,3\})=\sqrt{14}.
\end{align*}
Therefore the Shapley value of the above game is
\begin{align*}
\phi_{1}(\lambda)&=\frac{1}{6}\left(2 \sqrt{14} + \sqrt{10} + \sqrt{5} - 3 - 2 \sqrt{13}\right),\\
\phi_{2}(\lambda)&=\frac{1}{6}\left(2 \sqrt{14} + \sqrt{13} + \sqrt{5} - 2 \sqrt{10}\right),\\
\phi_{3}(\lambda)&=\frac{1}{6}\left(2 \sqrt{14} + \sqrt{13} + \sqrt{10} + 3 - 2 \sqrt{5}\right).
\end{align*}

For $S=\{2,3\}$ the covariance matrix becomes
\begin{equation*}
\Sigma^{S}=
\begin{bmatrix}
1 & 0 \\
0 & 13
\end{bmatrix}
\end{equation*}
and the corresponding games is
\begin{equation*}
\lambda^{S}(\{1\})=1, \quad \lambda^{S}(S)=\sqrt{13}, \quad \lambda^{S}(\{1,S\})=\sqrt{14}.
\end{equation*}
The Shapley value of the above game is
\begin{align*}
\phi_{1}(\lambda^{S})&=\frac{1}{2}\left(1 +  \sqrt{14} - \sqrt{13}\right),\\
\phi_{S}(\lambda^{S})&=\frac{1}{2}\left(\sqrt{14} + \sqrt{13} - 1\right).
\end{align*}
We have 
\begin{equation*}
\phi_{S}(\lambda^{S}) \neq \phi_{2}(\lambda) + \phi_{3}(\lambda).
\end{equation*}
\end{example}

\subsection{A conjecture}

Given two vectors $\boldsymbol{x}, \boldsymbol{y} \in \mathbb{R}^{n}$ we say that $\boldsymbol{x}$ is majorized by $\boldsymbol{y}$ ($\boldsymbol{x} \prec \boldsymbol{y}$) if 
\begin{align*}
\sum_{i=k}^{n} x_{(i)} &\le \sum_{i=k}^{n} y_{(i)}\quad\text{for all }k\in\{1, \dots, n-1\},\\
\sum_{i=1}^{n} x_{i} &= \sum_{i=1}^{n} y_{i},
\end{align*}
where $x_{(1)} \le x_{(2)} \le \dots \le x_{(n)}$ is the increasing rearrangement of $\boldsymbol{x}$. The reader is referred to \citet{MarOlkArn:Springe2011} for properties of majorization. 

The following proposition shows that, for $n=2$, the normalized Shapley value of the variance game majorizes the corresponding normalized Shapley value of the standard deviation game.
\begin{proposition}
Consider a covariance matrix
\begin{equation*}
\Sigma =
\begin{bmatrix}
\sigma_{1}^{2} & \sigma_{12} \\
\sigma_{12} & \sigma_{2}^{2}
\end{bmatrix}.
\end{equation*} 
Call $\nu$ the corresponding variance game and $\lambda$ the corresponding standard deviation game. Then 
\begin{equation*}
\frac{1}{\phi_{1}(\lambda)+\phi_{2}(\lambda)} \boldsymbol{\phi}(\lambda) \prec 
\frac{1}{\phi_{1}(\nu)+\phi_{2}(\nu)} \boldsymbol{\phi}(\nu),
\end{equation*}
\end{proposition}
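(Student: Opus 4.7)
The plan is to compute both Shapley values in closed form and reduce the majorization inequality to the Cauchy--Schwarz bound $|\sigma_{12}|\le\sigma_1\sigma_2$. Assume throughout that $\sigma:=\sqrt{\sigma_1^2+\sigma_2^2+2\sigma_{12}}>0$, so that both denominators in the statement are nonzero; the degenerate case $\sigma=0$ forces $\boldsymbol{\phi}(\nu)=\boldsymbol{0}$ by Proposition~\ref{pr:zerovar} and a direct check gives $\boldsymbol{\phi}(\lambda)=\boldsymbol{0}$ as well, so the claim is vacuous there.

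First I would apply Theorem~\ref{th:shapley} to obtain
\begin{equation*}
\phi_1(\nu)=\sigma_1^2+\sigma_{12},\qquad \phi_2(\nu)=\sigma_2^2+\sigma_{12},\qquad \phi_1(\nu)+\phi_2(\nu)=\sigma^2,
\end{equation*}
and use the standard two-player Shapley formula for $\lambda$:
\begin{equation*}
\phi_1(\lambda)=\tfrac{1}{2}(\sigma_1+\sigma-\sigma_2),\qquad \phi_2(\lambda)=\tfrac{1}{2}(\sigma_2+\sigma-\sigma_1),\qquad \phi_1(\lambda)+\phi_2(\lambda)=\sigma.
\end{equation*}
Both normalized vectors lie on the simplex in $\mathbb{R}^2$, so majorization $\boldsymbol{x}\prec\boldsymbol{y}$ is equivalent to $\max_i y_i\ge\max_i x_i$. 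Without loss of generality I would assume $\sigma_1\le\sigma_2$; then the larger coordinates are $(\sigma_2^2+\sigma_{12})/\sigma^2$ and $(\sigma_2-\sigma_1+\sigma)/(2\sigma)$, respectively.

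The core step is the inequality
\begin{equation*}
\frac{\sigma_2-\sigma_1+\sigma}{2\sigma}\le\frac{\sigma_2^2+\sigma_{12}}{\sigma^2}.
\end{equation*}
Clearing denominators and using $\sigma^2=\sigma_1^2+\sigma_2^2+2\sigma_{12}$ collapses this, after cancellation, to
\begin{equation*}
\sigma(\sigma_2-\sigma_1)\le(\sigma_2-\sigma_1)(\sigma_1+\sigma_2).
\end{equation*}
When $\sigma_1=\sigma_2$ this is an equality and the two normalized vectors both equal $(1/2,1/2)$; when $\sigma_1<\sigma_2$, dividing by $\sigma_2-\sigma_1>0$ reduces the claim to $\sigma\le\sigma_1+\sigma_2$, equivalently $\sigma^2\le(\sigma_1+\sigma_2)^2$, which is just the Cauchy--Schwarz inequality $\sigma_{12}\le\sigma_1\sigma_2$.

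I do not expect a real obstacle here: once the explicit formulas are in place, the reduction to Cauchy--Schwarz is essentially forced. The only mildly delicate point is the sign bookkeeping when $\sigma_1=\sigma_2$ or when $\sigma_{12}$ is negative and large in absolute value, but positive semidefiniteness of $\Sigma$ ensures both $\sigma>0$ (or the trivial case) and $|\sigma_{12}|\le\sigma_1\sigma_2$, closing the argument.
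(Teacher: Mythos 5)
Your proof is correct. It shares the same skeleton as the paper's argument---compute $\boldsymbol{\phi}(\nu)$ from Theorem~\ref{th:shapley}, compute $\boldsymbol{\phi}(\lambda)$ from the two-player Shapley formula, assume $\sigma_1\le\sigma_2$, and observe that for two-dimensional vectors with equal (unit) sums majorization reduces to a single scalar inequality between corresponding coordinates---but the closing step is genuinely different. The paper writes $\sigma_{12}=\rho\sigma_1\sigma_2$, rewrites the inequality as $(\sigma_1-\sigma_2)\sqrt{\sigma_1^2+\sigma_2^2+2\rho\sigma_1\sigma_2}+\sigma_2^2+\rho\sigma_1\sigma_2\ge 0$, verifies it at the extreme $\rho=-1$, and concludes by monotonicity in $\rho$ (a claim it asserts without computing the derivative). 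You instead clear denominators and cancel, reducing everything to $\sigma(\sigma_2-\sigma_1)\le(\sigma_2-\sigma_1)(\sigma_1+\sigma_2)$, i.e.\ to $\sigma\le\sigma_1+\sigma_2$, which is exactly subadditivity of the standard deviation (Cauchy--Schwarz, $\sigma_{12}\le\sigma_1\sigma_2$). Your route is more elementary and self-contained: it avoids the unproved monotonicity step, makes the role of positive semidefiniteness of $\Sigma$ explicit, and treats the boundary cases ($\sigma_1=\sigma_2$ and the degenerate $\sigma=0$) cleanly; the paper's parametrization by $\rho$, on the other hand, makes visible that the inequality is tightest at perfect negative correlation, which is a useful structural insight. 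Both proofs are valid; yours could be spliced in essentially as written.
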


\begin{proof}
Assume, without any loss of generality, that $\sigma_{1} \le \sigma_{2}$.
We need to show that 
\begin{equation*}
\frac{\phi_{1}(\lambda)}{\phi_{1}(\lambda)+\phi_{2}(\lambda)} \ge 
\frac{\phi_{1}(\nu)}{\phi_{1}(\nu)+\phi_{2}(\nu)},
\end{equation*}
that is
\begin{equation*}
\frac{\sigma_{1}+\sqrt{\sigma_{1}^{2} + \sigma_{2}^{2} + 2 \sigma_{12}}-\sigma_{2} }{\sqrt{\sigma_{1}^{2} + \sigma_{2}^{2} + 2 \sigma_{12}}} \ge \frac{\sigma_{1}^{2}+\sigma_{12}}{\sigma_{1}^{2} + \sigma_{2}^{2} + 2 \sigma_{12}}.
\end{equation*}
After simple algebra, this corresponds to 
\begin{equation}\label{eq:frho}
(\sigma_{1}-\sigma_{2})\sqrt{\sigma_{1}^{2} + \sigma_{2}^{2} + 2 \rho\sigma_{1}\sigma_{2}} + \sigma_{2}^{2}+\rho\sigma_{1}\sigma_{2}\ge 0,
\end{equation}
where $\sigma_{12}=\rho\sigma_{1}\sigma_{2}$.

For $\rho=-1$, expression \eqref{eq:frho} becomes
\begin{equation*}
-\sigma_{1}^{2}+\sigma_{1}\sigma_{2}\ge 0,
\end{equation*}
and is therefore true.
Since the right hand side of \eqref{eq:frho} is increasing in $\rho$, we have the result.
\qed
\end{proof}

We conjecture the above result to be true for all $n \in \mathbb{N}$.

\begin{conjecture}
For any $n\times n$ covariance matrix $\Sigma$, if $\nu$ is the  corresponding variance game  and $\lambda$ the corresponding standard deviation game, then
\begin{equation}\label{eq:genmajor}
\frac{1}{\sum_{i=1}^{n}\phi_{i}(\lambda)} \boldsymbol{\phi}(\lambda) \prec 
\frac{1}{\sum_{i=1}^{n}\phi_{i}(\nu)} \boldsymbol{\phi}(\nu).
\end{equation}
\end{conjecture}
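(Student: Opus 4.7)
The plan is to reduce the claim to a pointwise statement via an integral representation of the square root, and then invoke the convexity of the majorization order. By the efficiency property, both $\boldsymbol{p} := \boldsymbol{\phi}(\nu)/\nu(N)$ and $\boldsymbol{q} := \boldsymbol{\phi}(\lambda)/\lambda(N)$ have entries summing to $1$, so \eqref{eq:genmajor} is a genuine majorization between equal-sum vectors. The degenerate case $\nu(N) = 0$ is already dispatched by Proposition~\ref{pr:zerovar}, so I assume $\nu(N) > 0$.

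The key identity is the Bernstein representation
\begin{equation*}
\sqrt{x} \;=\; \frac{1}{2\sqrt{\pi}} \int_{0}^{\infty} \bigl(1 - e^{-tx}\bigr)\, t^{-3/2}\, dt \qquad (x \ge 0),
\end{equation*}
which, applied pointwise to $\nu(J) \ge 0$, yields $\lambda(J) = \int_0^\infty \mu_t(J)\, d\pi(t)$ with $\mu_t(J) := 1 - e^{-t\nu(J)}$ and $d\pi(t) := (2\sqrt{\pi})^{-1} t^{-3/2}\, dt$. Each $\mu_t$ is a valid characteristic function because $\mu_t(\varnothing) = 0$. The linearity property of the Shapley value together with Fubini's theorem give $\phi_i(\lambda) = \int_0^\infty \phi_i(\mu_t)\, d\pi(t)$ and $\lambda(N) = \int_0^\infty \mu_t(N)\, d\pi(t)$; normalising through the probability measure $d\widetilde\pi(t) := \mu_t(N)\, d\pi(t)/\lambda(N)$ produces the convex combination
\begin{equation*}
\boldsymbol{q} \;=\; \int_0^\infty \frac{\boldsymbol{\phi}(\mu_t)}{\mu_t(N)}\, d\widetilde\pi(t).
\end{equation*}
By the Hardy--Littlewood--P\'olya theorem, the set $\{\boldsymbol{x} : \boldsymbol{x} \prec \boldsymbol{p}\}$ is the convex hull of the permutations of $\boldsymbol{p}$, and therefore convex, so it suffices to establish the pointwise claim $\boldsymbol{\phi}(\mu_t)/\mu_t(N) \prec \boldsymbol{p}$ for every $t > 0$.

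This pointwise majorization is where I expect the real difficulty. The two endpoints are reassuring: as $t \to 0^+$ one has $\mu_t(J)/t \to \nu(J)$ uniformly on $2^{N}$, so $\boldsymbol{\phi}(\mu_t)/\mu_t(N) \to \boldsymbol{p}$ and the majorization is an equality; as $t \to \infty$, under non-degeneracy $\mu_t(J) \to \mathbf{1}\{J \ne \varnothing\}$, whose Shapley value is the uniform vector $(1/n, \ldots, 1/n)$---the $\prec$-minimum among unit-sum vectors. The preferred interpolation strategy is a monotonicity statement: show that $t \mapsto \boldsymbol{\phi}(\mu_t)/\mu_t(N)$ is non-increasing in the majorization order, by differentiating in $t$, applying the permutation representation \eqref{eq:shapleyperm}, and exploiting the marginal-contribution identity $\nu(J \cup \{i\}) - \nu(J) = \Var[X_i] + 2 \sum_{j \in J} \Cov[X_i, X_j]$ from the proof of Theorem~\ref{th:shapley} to reduce the claim to a Schur-concavity condition on the derivative. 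A natural fallback is to iterate the Bernstein trick through $1 - e^{-tx} = \int_0^t x\, e^{-sx}\, ds$ and to argue the majorization directly for the simpler games $J \mapsto \nu(J)\, e^{-s\nu(J)}$. A subtlety to keep in mind is that $\boldsymbol{p}$ can have negative entries when the $X_i$ hedge each other, so the argument must be phrased purely in terms of the Schur order and cannot rely on positivity.
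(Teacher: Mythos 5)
You should first note that the statement you are trying to prove is stated in the paper as a \emph{conjecture}: the authors do not prove it. They establish it only for $n=2$ (the preceding proposition, by a direct computation with $\sigma_1,\sigma_2,\rho$) and report numerical verification for diagonal $\Sigma$ with $n=3,4,5$. So there is no paper proof to match; the question is whether your argument closes the open problem, and it does not.

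Your reduction step is sound as far as it goes: the Bernstein representation $\sqrt{x}=\frac{1}{2\sqrt{\pi}}\int_0^\infty(1-e^{-tx})t^{-3/2}\,dt$ does write $\lambda$ as a mixture of the games $\mu_t(J)=1-e^{-t\nu(J)}$, the Shapley value passes through the integral because $\phi_i$ is a fixed finite linear combination of the values $\nu(J)$, and the Hardy--Littlewood--P\'olya characterization of $\{\boldsymbol{x}:\boldsymbol{x}\prec\boldsymbol{p}\}$ as the (convex) hull of permutations of $\boldsymbol{p}$ correctly lets you average majorized vectors with the probability weights $d\widetilde\pi$. But the entire content of the conjecture is then pushed into the pointwise claim $\boldsymbol{\phi}(\mu_t)/\mu_t(N)\prec\boldsymbol{\phi}(\nu)/\nu(N)$ for every $t>0$, which you explicitly leave unproven (``where I expect the real difficulty''). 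That claim is not a technical remainder: it asserts a Schur-type comparison between the Shapley values of $\nu$ and of a nonlinear (concave, bounded) transform of $\nu$, for a whole family of transforms, and nothing in the paper or in your sketch establishes it --- the proposed monotonicity of $t\mapsto\boldsymbol{\phi}(\mu_t)/\mu_t(N)$ in the majorization order is itself a statement at least as strong and as delicate as the original conjecture (note also that your $t\to\infty$ endpoint requires $\nu(J)>0$ for all nonempty $J$, which can fail, e.g.\ when a subcoalition hedges perfectly as in the paper's example with $\Var[X_1+X_2]=0$). As it stands, you have reformulated the conjecture, not proved it; to make progress you would need to actually establish the pointwise majorization for the exponential games (or for the games $J\mapsto\nu(J)e^{-s\nu(J)}$ in your fallback), and it would be prudent to first test it numerically, since if it fails for some $t$ the whole reduction route is dead even though the conjecture itself may still be true.
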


We have verified the conjecture numerically when the matrix $\Sigma$ is diagonal.
The program that verifies the conjecture was written in C and is based on the following consideration. 
Let 
\begin{align*}
S^{n-1} &= \left\{(\sigma_{1}, \dots, \sigma_{n})\in\mathbb{R}^{n}_{+}:\sum_{i=1}^{n}\sigma_{i}^{2}=1\right\}, \\
D_{n}&= \left\{(\sigma_{1}, \dots, \sigma_{n})\in\mathbb{R}^{n}_{+}:\sigma_{1} \le \sigma_{2} \le \dots \le \sigma_{n-1} \le \sigma_{n} \right\}, \\
M_{n} &= S^{n-1} \cap D_{n}.
\end{align*}
Because of the normalization factors in both sides of \eqref{eq:genmajor}, if the conjecture holds for each  diagonal matrix $\Sigma =\diag(\sigma_{1}^{2}, \sigma_{2}^{2}, \dots, \sigma_{n}^{2})$ then it holds also for the diagonal matrix $\Sigma'=\diag(\alpha\sigma_{\psi(1)}^{2}, \dots, \alpha\sigma_{\psi(n)}^{2})$, with $\alpha>0$ and $\psi$ any permutation of $(1, \dots, n)$.
Therefore, in order to verify the conjecture for any diagonal covariance matrix, it suffices to verify it for each $(\sigma_{1},\sigma_{2},\dots,\sigma_{n}) \in M_{n}$. 

The procedure works as follows. For a given number of players $n$ we extract $N$ independent normally distributed random vectors $\boldsymbol{Z}_j = (Z_{1,j},Z_{2,j},\dots,Z_{n,j}) \sim \mathcal{N}(\boldsymbol{0},I_n)$ where $I_n$ denotes the $n \times n$ identity matrix. 
It is well known \citep{Mul:CACM1959} that $\boldsymbol{X}_j = (X_{1,j},X_{2,j},\dots,X_{n,j})$, where $X_{i,j}= Z_{i,j}/||\boldsymbol{Z}_j||$, is uniformly distributed on $S^{n-1}$. 
Therefore $|\boldsymbol{X}_j|:=(|X_{1,j}|,|X_{2,j}|,\dots,|X_{n,j}|)$ is uniformly distributed on the intersection of $S^{n-1}$ and the nonnegative orthant of $\mathbb{R}^n$.
Call $\boldsymbol\sigma_j = (\sigma_{1,j},\sigma_{2,j},\dots,\sigma_{n,j})$  the nondecreasing rearrangement of $\boldsymbol X_j$. Then the points $\{\boldsymbol\sigma_j\}_{j=1}^N$ are independently uniformly distributed on the set $M_n$.
The procedure checks for each point  $\{\boldsymbol\sigma_j\}_{j=1}^N$ whether the $n-1$ inequalities given by the majorization conditions in \eqref{eq:genmajor} hold for the diagonal covariance matrix $\Sigma_j =\mathrm{diag}(\sigma_{1,j}^2, \sigma_{2,j}^2, \dots, \sigma_{n,j}^2)$. Conditions were tested when $n=3,4,5$ and $N = 10^{9}$.

\section{Computational aspects}\label{se:computational}

Theorem~\ref{th:shapley} shows that the Shapley value of the variance game can be easily computed in polynomial time. For each $i\in\{1, \dots, n\}$ the value $\phi_{i}(\nu)$ is just the sum of $n$ known covariances. We now want to frame this result in a more general framework concerning computational complexity of the Shapley value in suitable classes of games.

In a general coalition formation problem there are two main sources of computational complexity: the computation of each single coalition's value and how to distribute this value among the participants of each coalition.
The former appears when each coalition has to solve a hard optimization problem in order to compute its value.
The latter depends on the characteristic function of the game and on the solution concept.

In our setting, coalitions do not face any hard optimization problem to compute their values, and, among the solution concepts, we use the Shapley value as an allocation criterion.
Thus, the interesting question that we address is why the Shapley value of the variance game can be computed in an efficient way, whereas a similar method cannot be used for the standard deviation game.
The reason lies in the form of the characteristic function of the two games. The characteristic function of the variance game is easily decomposable in a sum of distinct functions, whereas the characteristic function of the standard deviation game is not decomposable due to the presence of the square root.
Therefore, to the best of our knowledge, all algorithms to compute \emph{exactly} the Shapley value of the standard deviation game are non-polynomial.

\citet{ConSan:P19NCAI2004} prove that the Shapley value is efficiently computable if the characteristic function of the game can be decomposed in a specific form.
In the following we  show that the characteristic function of the variance game respects their decomposition  requirements.
We first introduce the definition of \emph{decomposition} of a characteristic function.

\begin{definition}[\protect{\citet[Definition~4]{ConSan:P19NCAI2004}}]
The vector of characteristic functions $(\nu_{1},\nu_{2},\ldots,\nu_{T})$, with each $\nu_{t} : 2^{N} \to 	\mathbb{R}$, is a decomposition over $T$
issues of characteristic functions $\nu : 2^{N} \to \mathbb{R}$ if for any $J \subseteq N$, $\nu(J) = \sum_{t=1}^{T} \nu_{t}(J)$.
\end{definition}

The decomposition of the original characteristic function is particularly convenient if each $\nu_{t}$ restricts its focus on a subset of agents.

\begin{definition}[\protect{\citet[Definition~5]{ConSan:P19NCAI2004}}]\label{de:C}
We say that $\nu_{t}$ concerns only $C_{t} \subseteq N$ if $\nu_{t}(J_{1}) = \nu_{t}(J_{2})$ whenever $C_{t} \cap J_{1} = C_{t} \cap J_{2}$.
In this case, we only need to define $\nu_{t}$ over $2^{C_{t}}$.
\end{definition}

This representation shrinks the number of values from $2^{|N|}$ to $\sum_{t=1}^{T}2^{|C_{t}|}$, exponentially fewer than the original representation.
Notice that when $|C_{t}|$ is bounded by a small constant, the number of values is linear in $T$.

Now, the characteristic function of the variance game is represented by

\begin{equation*}
\nu(J)=\Var\left[\sum _{i\in J}X_{i}\right]=\sum _{i\in J}\sum _{j\in J}\Cov [X_{i},X_{j}].
\end{equation*}
Thus, for each set $J$, $\nu$ can be decomposed into $|J| (|J|+1)/2$ terms, considering that $\Cov [X_{i},X_{j}]=\Cov [X_{j},X_{i}]$.
Notice that for each set $J^{\prime} \subseteq J$ all the characteristic functions in $J^{\prime}$ are also present in $J$.
We can represent $T$ as the set of all pairs $(i,j)\in N\times N$ with $i\le j$. Consequently, $|T| = |N| (|N|+1)/2 < |N|^{2}$.

For each $(i,j) \in T$ we have
\[
 \nu_{(i,j)}(J) = 
  \begin{cases} 
  \Var[X_{i}]  & \text{if } J = (i,i),\\
   2\Cov(X_{i}, X_{j}) & \text{if } J = (i,j)  \text{ and } i\neq j,\\
   0 & \text{otherwise}.
  \end{cases}
\]
Therefore, by Definition~\ref{de:C} we know that each $\nu_t \in T$ concerns at most $2$ players, thus $|C_t| \leq 2$ for all $t \in T$. We can then apply the following theorem:

\begin{theorem}[\protect{\citet[Theorem~1]{ConSan:P19NCAI2004}}]
Suppose we are given a characteristic function
with a decomposition 
$\nu = \sum^{T}_{t=1}
\nu_{t}$,
represented as follows.
For each $t$ with $1\leq t \leq T$ we are given $C_{t} \subseteq N$, so that each $\nu_{t}$ concerns only $C_t$.
Each $\nu_{t}$ is flatly represented over $2^{C_{t}}$ , that is, for each $t$ with $1 \leq t \leq T$, 
we are given $\nu_{t} (J_{t})$ explicitly for each $J_{t} \subseteq C_{t}$.
Then (assuming that table lookups for the $\nu_{t}(J_{t})$, as well computations of factorials, multiplications and subtractions take constant time), we can compute the Shapley value of $\nu$ for any given agent in time
$O(\sum_{t=1}^{T} 2^{|C_{t}|})$, or less precisely $O(T \cdot 2^{\max_{t} |C_{t}| } )$. This holds
whether or not the characteristic function is increasing, and
whether or not it is superadditive.
\end{theorem}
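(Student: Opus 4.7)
The plan is to base everything on the linearity axiom of the Shapley value together with the dummy-player axiom, so as to localize each summand to its own $C_t$. Since $\nu = \sum_{t=1}^{T}\nu_t$, linearity gives $\phi_i(\nu) = \sum_{t=1}^{T}\phi_i(\nu_t)$, and it suffices to compute each $\phi_i(\nu_t)$ in time $O(2^{|C_t|})$. Fix a target agent $i$. For every $t$ with $i\notin C_t$, agent $i$ is a dummy in $\nu_t$: by Definition~\ref{de:C}, the sets $J$ and $J\cup\{i\}$ agree on $C_t$ for any $J\subseteq N\setminus\{i\}$, so $\nu_t(J\cup\{i\})=\nu_t(J)$ and $\phi_i(\nu_t)=0$. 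These $t$ contribute nothing and can be skipped without even inspecting $\nu_t$.

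For each $t$ with $i\in C_t$, I would organize the Shapley-formula sum \eqref{eq:shapley} over $J\subseteq N\setminus\{i\}$ by the intersection $J':=J\cap C_t\subseteq C_t\setminus\{i\}$ and the free part $K:=J\setminus C_t\subseteq N\setminus C_t$. Because the marginal $\nu_t(J\cup\{i\})-\nu_t(J)$ depends only on $J'$, this gives
\begin{equation*}
\phi_i(\nu_t) \;=\; \sum_{J'\subseteq C_t\setminus\{i\}}\bigl(\nu_t(J'\cup\{i\})-\nu_t(J')\bigr)\sum_{K\subseteq N\setminus C_t}\frac{(|J'|+|K|)!\,(n-|J'|-|K|-1)!}{n!}.
\end{equation*}
The core technical step is the combinatorial identity
\begin{equation*}
\sum_{m=0}^{n-c_t}\binom{n-c_t}{m}\,\frac{(k+m)!\,(n-k-m-1)!}{n!} \;=\; \frac{k!\,(c_t-k-1)!}{c_t!},
\end{equation*}
where $c_t:=|C_t|$ and $k:=|J'|$. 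I would prove it cleanly via the Beta-integral representation $s!(r-s-1)!/r!=\int_0^1 x^s(1-x)^{r-s-1}\,\difff x$ combined with $(x+(1-x))^{n-c_t}=1$, which collapses the left-hand side to $\int_0^1 x^k(1-x)^{c_t-k-1}\,\difff x$; a direct induction on $n-c_t$ using Pascal's rule would also work.

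Substituting this back shows that $\phi_i(\nu_t)$ coincides with the ordinary Shapley value of agent $i$ in the game $\nu_t$ viewed as a characteristic function on the smaller player set $C_t$. Computing it is a sum of $2^{c_t-1}$ terms, each a single table lookup into the flat representation of $\nu_t$ over $2^{C_t}$ together with a constant number of factorial, multiplication, and subtraction operations, which yields $O(2^{|C_t|})$ time for each $t$ and the overall bound $O\bigl(\sum_{t=1}^{T}2^{|C_t|}\bigr)$, and \emph{a fortiori} $O\bigl(T\cdot 2^{\max_t|C_t|}\bigr)$. Crucially, monotonicity and superadditivity of $\nu$ are never invoked, since the whole argument rests only on the linearity and dummy axioms and on a combinatorial identity. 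The only step that requires any real work is the collapse of the inner sum over $K\subseteq N\setminus C_t$ into the local Shapley coefficient $k!(c_t-k-1)!/c_t!$ on $C_t$; once that identity is secured, everything else is bookkeeping.
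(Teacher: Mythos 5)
Your proposal is correct. Note that the paper does not prove this statement at all --- it is quoted verbatim from \citet[Theorem~1]{ConSan:P19NCAI2004} and used as a black box to reinterpret Theorem~\ref{th:shapley} --- so there is no in-paper argument to compare against; your proof is essentially the standard one from the cited source: linearity of \eqref{eq:shapley} reduces to the summands $\nu_t$, the dummy-player observation kills every $t$ with $i\notin C_t$, and the splitting $J=J'\cup K$ together with the Beta-integral identity (which checks out, since $x^{k+m}(1-x)^{n-k-m-1}=x^k(1-x)^{c_t-k-1}\cdot x^m(1-x)^{n-c_t-m}$ and the binomial sum over $m$ collapses to $1$) shows $\phi_i(\nu_t)$ is just the Shapley value of $i$ in the restricted game on $C_t$, giving the claimed $O\bigl(\sum_t 2^{|C_t|}\bigr)$ bound without ever using monotonicity or superadditivity.
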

This confirms the outcome of our Theorem~\ref{th:shapley}, that is, the Shapley value of the variance game is computable in polynomial time.

Similar computational aspects of the Shapley value based on decomposition ideas have been studied by 
\citet{IeoSho:P6CEC2005,IeoSho:P7CEC2006}.

\bibliographystyle{artbibst}
\bibliography{bibshapley}

\end{document}